\newtheorem{theorem}{Theorem}[section]
\newtheorem{proposition}[theorem]{Proposition}
\newtheorem{definition}[theorem]{Definition}
\newtheorem{example}[theorem]{Example}
\newcommand{\id}{\mbox{id}}
\newcommand{\Sym}{\operatorname{Sym}}
\newcommand{\Aut}{\operatorname{Aut}}
\newcommand{\Soc}{\operatorname{Soc}}
\newcommand{\St}{\operatorname{St}}
\newcommand{\Z}{\mathbb{Z}}
\renewcommand{\ker}{\operatorname{Ker}}
\newcommand{\Id}{\operatorname{Id}}
\newcommand{\sym}{\operatorname{Sym}}
\newcommand{\aut}{\operatorname{Aut}}
\newcommand{\soc}{\operatorname{Soc}}
\newcommand{\core}{\operatorname{core}}
\newenvironment{proof}{\par\noindent{\bf Proof.}}{$\qed$\par\bigskip}
\newcommand{\qed}{\enspace\vrule  height6pt  width4pt  depth2pt}
\begin{document}

\title{Solutions of the Yang-Baxter equation associated with a left brace
}

\author{David Bachiller\footnote{Research partially supported by a grant of MICIIN (Spain)
MTM2011-28992-C02-01.}\and Ferran Ced\'o\footnote{Research partially
supported by a grant of MICIIN (Spain) MTM2011-28992-C02-01.}\and
Eric Jespers\footnote{Research supported in part by Onderzoeksraad
of Vrije Universiteit Brussel and Fonds voor Wetenschappelijk
Onderzoek (Belgium).}}
\date{}
\maketitle

\begin{abstract}
Given a left brace $G$, a method to construct all the
involutive, non-degenerate set-theoretic solutions $(Y,s)$ of the YBE,
such that $\mathcal{G}(Y,s)\cong G$ is given. This method depends
entirely on the brace structure of $G$.
\end{abstract}

\section{Introduction}
The quantum Yang-Baxter equation is an important equation coming from theoretical
physics, first appearing in the works of Yang \cite{Yang} and Baxter \cite{Baxter}.
Recall that a solution of the Yang-Baxter equation is a linear map
$R:V\otimes V \longrightarrow V\otimes V$, where $V$ is a vector
space, such that
    $$R_{12}R_{13}R_{23} = R_{23}R_{13}R_{12},$$
where $R_{ij}$ denotes the map $V\otimes V \otimes V \longrightarrow
V\otimes V \otimes V$ acting as $R$ on the $(i,j)$ tensor factor and
as the identity on the remaining factor. A central open problem
is to construct new families of solutions of this equation.
It is this problem which initially motivated the definition of quantum groups,
and one of the reasons of the recent interest in Hopf algebras, see \cite{kassel}.

Note that if $X$ is a basis
of the vector space $V$, then a map $\mathcal{R}:X\times X
\longrightarrow X\times X$, such that
    $$\mathcal{R}_{12}\mathcal{R}_{13}\mathcal{R}_{23} = \mathcal{R}_{23}\mathcal{R}_{13}\mathcal{R}_{12},$$
where $\mathcal{R}_{ij}$ denotes the map $X\times X \times X
\longrightarrow X\times X \times X$ acting as $\mathcal{R}$ on the
$(i,j)$ components and as the identity on the remaining component,
induces a solution of the Yang-Baxter equation. In this case, one
says that $(X,\mathcal{R})$ (or $\mathcal{R}$) is a set-theoretic solution of the quantum
Yang-Baxter equation. Drinfeld, in \cite{drinfeld}, posed the
question of finding these set-theoretic solutions.

A subclass of this type of solutions, the non-degenerate involutive
ones, has received a lot of attention in the last years \cite{CJO,
CJO2, CJR, ESS, Gat, GC, gat-maj, GIVdB, JO, JObook, LYZ, rump1,
rump3}. This class of solutions is not only studied for the
applications of the Yang-Baxter equation
 in physics,
but also for its connection with other topics in mathematics of recent interest:  semigroups of I-type
and Bieberbach groups \cite{GIVdB},
bijective 1-cocycles \cite{ESS}, radical rings \cite{rump3}, triply factorized groups \cite{sysak},
construction of semisimple
minimal triangular Hopf algebras \cite{EtingofGelaki}, regular subgroups of the holomorf
and Hopf-Galois extensions \cite{CR, FCC}, and groups of central type \cite{BenDavid}.

Gateva-Ivanova and Van den Bergh \cite{GIVdB}, and Etingof, Schedler
and Soloviev \cite{ESS} introduced this subclass of solutions and
associated with each solution $(X,r)$ of this type two groups which
are fundamental for its study: the structure group $G(X,r)$, and the
permutation group $\mathcal{G}(X,r)$. In order to study this
class of solutions, Rump in \cite{rump3} introduced a new algebraic
structure called brace. Recall that a left brace is a set $B$
with two operations, $+$ and $\cdot$,
 such that $(B,+)$ is an abelian group, $(B,\cdot)$ is a group and
$$
x\cdot (y+z)+x=x\cdot y+x\cdot z,
$$
for all $x,y,z\in B$. This structure is connected with the solutions of the Yang-Baxter equation because,
besides the group structure of $G(X,r)$ and $\mathcal{G}(X,r)$, any solution $(X,r)$ induces a sum
over these two groups that defines a structure of left brace.

In spite of all this progress, it is still an open problem to construct all the non-degenerate involutive set-theoretic
solutions of the Yang-Baxter equation.
Inspired by \cite{CJR}, and using the new techniques available when we introduce the brace structure,
we separate this problem in two parts:
\begin{enumerate}[1.]
\item Classify all the left braces.
\item For each left brace $G$, construct all the non-degenerate involutive set-theoretic solutions $(Y,s)$
with $\mathcal{G}(Y,s)\cong G$ as left braces, and classify them up to isomorphism.
\end{enumerate}

In this article, we focus on the second problem. We give a translation of the problem
completely in terms of the structure of left brace of $G$. Specifically, we show that the construction of solutions
with $\mathcal{G}(Y,s)\cong G$ is equivalent to find some subgroups of $(G,\cdot)$ and some orbits
of $G$ with respect to an action $\lambda$ that is defined in any left brace.
Note that, when $Y$ is finite, the associated permutation group
$\mathcal{G}(Y,s)\leq \sym_X$ is finite, so the set of subgroups and of orbits of $\mathcal{G}(Y,s)$ is finite,
and our result reduces the question of finding all the solutions of the Yang-Baxter equation
with the same permutation group (which are infinite) to a problem of finding a finite set of objects.

For previous results on the second problem, Ced\'o, Jespers
and del R\'io initiated the study of the problem
presenting a particular construction of this type in \cite[Section 5]{CJR}: for a given solution $(X,r)$, they
give a method to construct a solution $(X^2,r^{(2)})$ over $X^2=X\times X$ such that $\mathcal{G}(X^2,r^{(2)})
\cong \mathcal{G}(X,r)$. This result was then generalized in \cite{BC}, giving a method to construct solutions
 $r^{(n)}$ over $X^n$ for each $n$ such that $\mathcal{G}(X^n,r^{(n)})\cong \mathcal{G}(X,r)$. It was trying to
generalize \cite{BC} that we found the results of the present
article. We also have to mention that \cite[Corollary D]{BenDavid}
provides a homological solution to the second problem, but that
solution is not constructive. Another initial motivation for this
paper was to give an explicit and constructive solution.

The content of the paper is as follows. In Section 2, we
recall some definitions and
known results about left braces and the Yang-Baxter equation that we need in the next sections.
 Then, in Section 3 we present our method of construction of
solutions with respect to some subgroups and some orbits of $G$, and proof that any solution can be obtained
using this construction. It is possible
that two solutions constructed with this method are isomorphic, so
in Section 4 we present a method to detect isomorphism between
solutions, which essentially says that isomorphisms between solutions
are induced by automorphisms of the left brace $G$. So we also manage to translate the isomorphism
problem completely in terms of the brace structure of $G$. Finally, in Section 5,
we suggest a possible way to classify all the non-degenerate set-theoretic solutions of the Yang-Baxter equation
with fixed permutation group through the concept of basic solution, and
we apply the two theorems of Sections 3 and 4 to
find all the basic solutions with permutation group equal
to a brace with multiplicative group isomorphic to $\Z/(p^n)$ for
some prime $p$.

\section{Preliminary results}

\begin{definition}
A left brace is a set with two binary operations, an addition + and a multiplication $\cdot$,
such that $(B,+)$ is an abelian group, $(B,\cdot)$ is a group, and
\begin{eqnarray}\label{bracecond}
&& x\cdot (y+z)+x=x\cdot y+x\cdot z,
\end{eqnarray}
for all $x,y,z\in X$.
\end{definition}

A right brace is defined similarly, but changing the last property by
$(y+z)\cdot x+x=y\cdot x+z\cdot x$. When $B$ is both a left and a right brace, we say that
$B$ is a (two-sided) brace. A morphism between two left braces $B_1$ and $B_2$ is a map
$f:B_1\to B_2$ such that $f(x+y)=f(x)+f(y)$ and $f(x\cdot y)=f(x)\cdot f(y)$.

Now we define some important concepts in the study of left braces.
For $x\in B$, we define a map $\lambda_x\colon
B\longrightarrow B$ by $\lambda_x(y)=xy-x$ for all $y\in
B$. It is known that $\lambda_x$ is an automorphism of the
additive group of $B$ and the map $\lambda\colon B\longrightarrow
\Aut(B,+)$, defined by $x\mapsto \lambda_x$, is a morphism of groups
from the multiplicative group of $B$ to $\Aut(B,+)$. The kernel of this
morphism is called the socle
$$\soc(B):=\{ g\in B\mid \lambda_g=\id \}.$$

We will use in a fundamental way the next result about extensions of
braces with respect to the socle.

\begin{proposition}{
(\cite[Theorem~2.1]{bracesp3})}\label{extensions}  Let $H$ be an abelian group and $B$
be a left brace. Let $\sigma\colon (B,\cdot)\longrightarrow
\aut(H,+)$ be an injective morphism, and $h\colon
(H,+)\longrightarrow (B,+)$ be a surjective morphism. Suppose that
they satisfy $h(\sigma(g)(m))=\lambda_g(h(m))$ for all $g\in B$ and
$m\in H$. Then, the multiplication over $H$ given by
$$x\cdot y:=x+\sigma(h(x))(y)~~\forall x,y\in H,$$
defines a
structure of left brace on $H$ such that $h$ is a morphism of left
braces, $\soc(H)=\ker(h)$ and $H/\soc(H)\cong B$ as left braces.

Two of these structures, determined by $\sigma$, $h$ and $\sigma'$, $h'$ respectively, are isomorphic if
and only if there exists an $F\in \aut(H,+)$ such that
$$
\sigma'(h'(m))=F^{-1}\circ\sigma(h(F(m)))\circ F,
$$
for all $m\in H$.

Conversely, suppose that $G$ is a left brace. Then, the map
$\sigma\colon (G/\soc(G),\cdot)\longrightarrow\aut(G,+)$ induced by
the map $\lambda\colon (G,\cdot)\longrightarrow\aut(G,+)$, and the
natural map $h\colon G\longrightarrow G/\soc(G)$ satisfy the above
properties. $\qed$
\end{proposition}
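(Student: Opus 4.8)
The plan is to treat the three assertions in turn: the construction of the brace structure, the isomorphism criterion, and the converse. For the construction I would first check that $(H,\cdot)$ is a group, and associativity is the computational heart. Writing $x\cdot y=x+\sigma(h(x))(y)$ and expanding $(x\cdot y)\cdot z$, everything reduces to identifying $\sigma(h(x\cdot y))$. The key identity I would isolate is that $h$ is already multiplicative for the new product, i.e. $h(x\cdot y)=h(x)\cdot h(y)$. Indeed, applying the additive morphism $h$ and then the compatibility hypothesis $h(\sigma(g)(m))=\lambda_g(h(m))$ gives
$$h(x\cdot y)=h(x)+\lambda_{h(x)}(h(y))=h(x)+h(x)\cdot h(y)-h(x),$$
which collapses to $h(x)\cdot h(y)$ because $(B,+)$ is abelian. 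Since $\sigma$ is a group morphism, this yields $\sigma(h(x\cdot y))=\sigma(h(x))\circ\sigma(h(y))$, and associativity then follows by a direct expansion using that each $\sigma(h(x))$ is additive. For the identity I would use the standard brace fact that the additive and multiplicative identities of $B$ coincide, so that $h(0)$ is the unit of $(B,\cdot)$ and $\sigma(h(0))=\id$, making $0$ a two-sided unit; right inverses exist by solving $\sigma(h(x))(y)=-x$, and a monoid in which every element has a right inverse is a group. The left brace axiom is then immediate, since both sides of $x\cdot(y+z)+x=x\cdot y+x\cdot z$ expand, via additivity of $\sigma(h(x))$ and commutativity of $+$, to $x+x+\sigma(h(x))(y)+\sigma(h(x))(z)$.

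With $h(x\cdot y)=h(x)\cdot h(y)$ already in hand, $h$ is a morphism of left braces. For the socle I would compute $\lambda_g(y)=g\cdot y-g=\sigma(h(g))(y)$ in $H$ (again using commutativity of $+$), so that the $\lambda$-map of $g$ in $H$ equals $\sigma(h(g))$; since $\sigma$ is injective, this map is the identity exactly when $h(g)$ is the unit of $(B,\cdot)$, i.e. $g\in\ker(h)$, giving $\soc(H)=\ker(h)$. The isomorphism $H/\soc(H)\cong B$ then follows because $h$ is a surjective morphism of braces whose kernel is $\soc(H)$, so the induced map $H/\soc(H)\to B$ is a brace isomorphism.

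For the isomorphism criterion, note that any isomorphism $F$ between two such structures on the same additive group $H$ is in particular an element of $\aut(H,+)$. Imposing that $F$ respect the two products and using additivity, the condition $F(x\cdot_{\sigma'}y)=F(x)\cdot_{\sigma}F(y)$ becomes $F\circ\sigma'(h'(x))=\sigma(h(F(x)))\circ F$ for all $x$, which rearranges to the stated relation $\sigma'(h'(m))=F^{-1}\circ\sigma(h(F(m)))\circ F$; conversely any $F\in\aut(H,+)$ satisfying this is readily checked to be a brace isomorphism. The converse statement is the most routine: taking $H=G$, $B=G/\soc(G)$, with $h$ the quotient map and $\sigma$ the injection induced by $\lambda$ (well-defined and injective precisely because $\ker(\lambda)=\soc(G)$), the required compatibility $h(\sigma(\bar g)(m))=\lambda_{\bar g}(h(m))$ is exactly the statement that $h$ intertwines the $\lambda$-maps of $G$ and of $G/\soc(G)$, which holds for any morphism of braces.

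I expect the main obstacle to be organizing the first part so that the single identity $h(x\cdot y)=h(x)\cdot h(y)$ is established first and then reused: it is what makes associativity work (through $\sigma$ being a morphism) and simultaneously shows that $h$ is a brace morphism. Keeping track of which identity of $B$ is being used, together with the repeated appeals to commutativity of $(H,+)$ and of $(B,+)$, is where care is needed, but there is no deep difficulty once the compatibility hypothesis is exploited.
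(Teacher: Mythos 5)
Your argument is correct and complete: establishing $h(x\cdot y)=h(x)\cdot h(y)$ first and then deriving associativity via $\sigma(h(x\cdot y))=\sigma(h(x))\circ\sigma(h(y))$, the identity/inverse checks using $0_B=1_B$, the computation $\lambda_g=\sigma(h(g))$ for the socle, and the intertwining condition for the isomorphism criterion are all sound. Note that the paper itself gives no proof of this proposition --- it is quoted from \cite[Theorem~2.1]{bracesp3} --- so there is no in-paper argument to compare against; yours is the natural direct verification one would expect to find in that reference.
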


The importance of this algebraic structure is its relation with a
class of set-theoretic solutions of the Yang-Baxter equation, the
non-degenerate involutive ones. Given a set $X$, recall that a map
$r:X\times X\to X\times X$ is a set-theoretic solution of the
Yang-Baxter equation if
\begin{eqnarray}\label{braid}
&&r_{12}r_{23}r_{12}=r_{23}r_{12}r_{23}, \end{eqnarray} where
$r_{ij}$ denotes the map $X\times X\times X\to X\times X\times X$
acting as $r$ in the $(i,j)$ component and as the identity on the
remaining component\footnote{ To be precise, (\ref{braid}) is the
so-called braid equation. But, $r$ is a solution  of the braid
equation if and only if $\tau\circ r$ is a set-theoretic solution of
the Yang-Baxter equation (defined in the introduction), where
$\tau(x,y)=(y,x)$.}. If we write $r$ as
    $$\begin{array}{cccc} r\colon & X\times
    X&\longrightarrow &X\times X\\
    &(x,y)&\mapsto&(\sigma_x(y), \gamma_y(x)), \end{array}$$
$r$ is said to be involutive if $r^2=\id_{X^2}$. Moreover, it is said to
be non-degenerate if each map $\sigma_{x}$ and $\gamma_{x}$ is
bijective. In what follows, we will only consider non-degenerate
involutive set-theoretic solutions.

\paragraph{Convention:} By a \emph{solution of the YBE} we mean a non-degenerate
involutive set-theoretic solution of the Yang-Baxter equation.

\bigskip
Two groups are very important to study this type of solutions.
The first one, the structure group $G(X,r)$, is defined
by the presentation
$$G(X,r)=\langle x\in X\mid xy=\sigma_x(y)\gamma_y(x),\mbox{ for all } x,y\in X\rangle,$$
where $r(x,y)=(\sigma_x(y),\gamma_y(x))$. In \cite{ESS} it is
proved that the map $X\longrightarrow \mathbb{Z}^{(X)}\rtimes
\Sym_X$ defined by $x\mapsto (x,\sigma_x)$, for all $x\in X$,
extends to a homomorphism
$$\begin{array}{ccc}
G(X,r)&\longrightarrow &\mathbb{Z}^{(X)}\rtimes \Sym_X\\
g&\mapsto&(\pi(g),\phi(g))\end{array}$$ such that the map $\pi\colon
G(X,r)\longrightarrow \mathbb{Z}^{(X)}$ is a bijective $1$-cocycle.
$\mathbb{Z}^{(X)}$ denotes the free abelian group with basis $X$.

It is easy to see that the group $G(X,r)$ with the addition defined
by
$$g+h:=\pi^{-1}(\pi(g)+\pi(h)),$$
for all $g,h\in G(X,r)$, is a left brace. Note that the additive
group of $G(X,r)$ is a free abelian group with basis $X$.
Furthermore, for $g\in G(X,r)$ and $x\in X$, we have that
$\lambda_g(x)=gx-g=\phi(g)(x)\in X$.

The second important group, the permutation group of
$(X,r)$, is defined by
$$\mathcal{G}(X,r):=\{\phi(g)\mid g\in G(X,r)\}\leq\sym_X,$$
or, equivalently,
$$
\mathcal{G}(X,r):=\left\langle\sigma_x\mid x\in X\right\rangle.
$$
Note that $\mathcal{G}(X,r)$ with the addition defined by
$\phi(g)+\phi(h)= \phi(g+h)$, for all $g,h\in G(X,r)$, is a left
brace and the natural projection $G(X,r)\longrightarrow
\mathcal{G}(X,r)$ is a homomorphism of left braces with kernel equal
to the socle of $G(X,r)$.

The next result gives a first way to construct solutions of the YBE.
For references of this result, we can find a homological version of
it in \cite[Corollary D]{BenDavid}, and a first version without a
formal statement in \cite[pages 182--183]{ESS}.

\begin{proposition}\label{BenDavid}
Let $G$ be a left brace, and let $Y$ be a set. Suppose that $h\colon
\Z^{(Y)}\longrightarrow (G,+)$ is a surjective morphism, and that
$\sigma\colon (G,\cdot)\longrightarrow\aut(\Z^{(Y)})$ is an
injective morphism such that $\sigma(g)\mid_Y$ is a bijection of $Y$
for all $g\in G$, and $h(\sigma(g)(m))=\lambda_g(h(m))$ for all
$g\in G$ and $m\in\Z^{(Y)}$. Let $s$ be the map
$$
\begin{array}{cccc}
s\colon & Y\times Y &\longrightarrow & Y\times Y\\
& (x,~y) &\mapsto &(f_{x}(y),~f^{-1}_{f_{x}(y)}(x)),
\end{array}
$$
where $f_x(y):=\sigma(h(x))(y)$. Then $(Y,s)$ is a solution of the
YBE and $\mathcal{G}(Y,s)\cong G$ as left braces. Moreover, any
solution $(Z,t)$ of the YBE with $\mathcal{G}(Z,t)\cong G$ is of
this form.
\end{proposition}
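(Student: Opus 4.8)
The plan is to read the hypotheses as an instance of Proposition~\ref{extensions} and then pass through the dictionary between left braces and structure groups of solutions. First I would observe that the data $(\sigma,h)$ are exactly those required in Proposition~\ref{extensions} with the roles $B:=G$ and $H:=\Z^{(Y)}$: indeed $\sigma\colon(G,\cdot)\to\aut(\Z^{(Y)})$ is an injective morphism, $h\colon(\Z^{(Y)},+)\to(G,+)$ is a surjective morphism, and the compatibility $h(\sigma(g)(m))=\lambda_g(h(m))$ is assumed. Applying that proposition endows $\Z^{(Y)}$ with a structure of left brace $B$, with multiplication $x\cdot y=x+\sigma(h(x))(y)$, for which $h$ is a morphism of left braces, $\soc(B)=\ker h$, and $B/\soc(B)\cong G$. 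The additive group of $B$ is free abelian with basis $Y$, and for $x,y\in Y$ one has $\lambda_x(y)=x\cdot y-x=\sigma(h(x))(y)=f_x(y)\in Y$ by the bijectivity hypothesis; thus each $\lambda_x$ with $x\in Y$ restricts to a permutation of $Y$, i.e. $Y$ is a $\lambda$-invariant free basis of $(B,+)$.

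Next I would recall the standard fact that every left brace $B$ gives rise to a solution of the YBE on its underlying set via $r_B(a,b)=(\lambda_a(b),\lambda_{\lambda_a(b)}^{-1}(a))$, and that $r_B$ restricts to a solution on any $\lambda$-invariant subset. Since $Y$ is such a subset and $\lambda_x|_Y=f_x$, this restriction is exactly $s$; in particular $(Y,s)$ is a solution of the YBE. (Here one uses that in any left brace $1=0$, so that $\gamma_y(x)=\lambda_{\lambda_x(y)}^{-1}(x)=f^{-1}_{f_x(y)}(x)$ has the stated form, and $\gamma_y(x)\in Y$ because $\lambda_u^{-1}$ preserves $Y$ for $u\in Y$.) To compute the permutation group I would identify $G(Y,s)$ with $B$: from the brace axiom, $u\cdot\lambda_u^{-1}(x)=u+x=x\cdot y$ for $u=\lambda_x(y)$, so the defining relation $x\cdot y=\sigma_x(y)\gamma_y(x)$ of $G(Y,s)$ holds inside $B$; hence $Y\hookrightarrow B$ extends to a morphism of left braces $\Psi\colon G(Y,s)\to B$ which is the identity on the common free basis $Y$, thus an additive isomorphism and therefore an isomorphism of left braces. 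Consequently $\mathcal{G}(Y,s)=G(Y,s)/\soc(G(Y,s))\cong B/\soc(B)\cong G$.

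For the converse, given a solution $(Z,t)$ with $\mathcal{G}(Z,t)\cong G$, take $Y:=Z$ and let $B:=G(Z,t)$ be its structure brace, so that $(B,+)=\Z^{(Z)}$ is free abelian on $Z$, each $\lambda_x$ preserves $Z$ with $\lambda_x|_Z=\sigma^t_x$, and $\mathcal{G}(Z,t)=B/\soc(B)$. By the converse part of Proposition~\ref{extensions} applied to $B$, the induced morphism $\sigma\colon(B/\soc(B),\cdot)\to\aut(\Z^{(Z)})$ and the natural projection $h\colon B\to B/\soc(B)$ satisfy all the stated compatibilities; composing with the given isomorphism $B/\soc(B)\cong G$ turns these into data $\sigma\colon(G,\cdot)\to\aut(\Z^{(Z)})$ and $h\colon\Z^{(Z)}\to(G,+)$ of the required kind, and the extra condition that $\sigma(g)|_Z$ be a permutation of $Z$ holds because every $\lambda_x$ preserves $Z$. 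Finally $f_x(y)=\sigma(h(x))(y)=\lambda_x(y)=\sigma^t_x(y)$ for $x,y\in Z$, so the solution produced by the construction is exactly $(Z,t)$, showing every such solution is of the required form.

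The step requiring most care is the forward identification $G(Y,s)\cong B$: one must know that the formula for $s$ genuinely defines a solution of the braid equation and that the $\lambda$-invariant free basis $Y$ recovers $B$ as a structure brace, not merely as a quotient or a subbrace. I expect to handle this through the brace-to-solution construction together with the universal property of the structure group: once the defining relations of $G(Y,s)$ are checked to hold inside $B$ (which rests on the brace axiom and on $1=0$), the morphism $\Psi$ induced by $\id_Y$ is the identity on the basis, hence an additive isomorphism and therefore a brace isomorphism, so that no separate argument that $Y$ generates $(B,\cdot)$ multiplicatively is needed.
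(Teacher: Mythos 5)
Your proof is correct and follows essentially the same route as the paper: both apply Proposition~\ref{extensions} to make $\Z^{(Y)}$ a left brace whose restricted solution on the $\lambda$-invariant basis $Y$ is exactly $s$, identify this brace with $G(Y,s)$, and obtain the converse by reading off $h$ and $\sigma$ from the structure brace of $(Z,t)$ and the given isomorphism with $\mathcal{G}(Z,t)$. The only cosmetic difference is that the paper exhibits the isomorphism $\mathcal{G}(Y,s)\cong G$ directly as $g\mapsto\sigma(g)|_Y$, whereas you pass through $G(Y,s)/\soc(G(Y,s))\cong \Z^{(Y)}/\ker(h)$.
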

\begin{proof}
By Proposition~\ref{extensions}, the abelian group
$\mathbb{Z}^{(Y)}$ with the multiplication defined by
$$x\cdot y:=x+\sigma(h(x))(y)~~\forall x,y\in \mathbb{Z}^{(Y)},$$
is a left brace and $h$ becomes a homomorphism of left braces. Note
that for $x, y\in Y\subseteq \mathbb{Z}^{(Y)}$, we have that
$$\lambda_x(y)=xy-x=x+\sigma(h(x))(y)-x=\sigma(h(x))(y)=f_x(y).$$
Therefore $(Y,s)$ is a solution of the YBE because it is the
restriction of the solution of the YBE associated with the left
brace $\mathbb{Z}^{(Y)}$ (cf. \cite[Lemma~2]{CJO2}). In fact the
left brace $\mathbb{Z}^{(Y)}$ is equal to the left brace $G(Y,s)$.
Recall that the addition of the left brace
$$\mathcal{G}(Y,s)=\{ \sigma(h(m))|_{Y}\mid m\in \Z^{(Y)}\}$$
is defined by
$\sigma(h(m_1))|_{Y}+\sigma(h(m_2))|_{Y}=\sigma(h(m_1+m_1))|_{Y}$,
for all $m_1,m_2\in \Z^{(Y)}$. Therefore the map $G\longrightarrow
\mathcal{G}(Y,s)$ defined by $g\mapsto \sigma(g)|_{Y}$ is an
isomorphism of left braces.

On the other hand, observe that, if $(Z,t)$ is a solution of the YBE
and  $\eta\colon G\longrightarrow \mathcal{G}(Z,t)$ is an
isomorphism of left braces, then the unique homomorphism
$h\colon\Z^{(Z)}\longrightarrow  (G,+)$ such that
$h(z)=\eta^{-1}(\pi(z))$, for all $z\in Z$, where $\pi\colon
G(Z,t)\longrightarrow \mathcal{G}(Z,t)$ is the natural projection,
is surjective, and the map
$\sigma:(G,\cdot)\longrightarrow\aut(\Z^{(Z)})$, where $\sigma(g)$
is the unique automorphism of $\Z^{(Y)}$ such that
$\sigma(g)(z)=\eta(g)(z)$, for all $z\in Z$, is an injective
homomorphism. Furthermore, for $g\in G$, there exists $m\in
\Z^{(Z)}$ such that $g=h(m)$ and
\begin{eqnarray*}
h(\sigma(g)(z))&=&h(\eta(g)(z))=\eta^{-1}(\pi(\eta(h(m))(z)))\\
&=&\eta^{-1}(\pi(\pi(m)(z)))=\eta^{-1}(\pi(m)\pi(z)-\pi(m))\\
&=&h(m)h(z)-h(m)=gh(z)-g\\
&=&\lambda_g(h(z)),
\end{eqnarray*}
for all $z\in Z$. Therefore $h(\sigma(g)(n))=\lambda_g(h(n))$, for
all $g\in G$ and all $n\in \Z^{(Z)}$. Now we have that
$$\sigma(h(x))(y)=\eta(h(x))(y)=\pi(x)(y),$$
for all $x,y\in Z$. Hence $(Z,t)$ is exactly the same solution of
the YBE that the solution obtained by the given construction using
the maps $h$ and $\sigma$.
\end{proof}

Note that this reduces the problem of finding all the solutions of
the YBE to the problem of finding the two maps $h$ and $\sigma$ with
the required properties. This has the disadvantage that we do not
know in principle how to construct these two maps. Our aim in the
next sections is trying to solve this difficulty.

\section{Construction of solutions}
Given a left brace $G$, we will try to construct all the
involutive non-degenerate set-theoretic solutions of the Yang-Baxter
equation $(Y,s)$ such that $\mathcal{G}(Y,s)\cong G$
as left braces. As we proved in the previous section, this is
equivalent to construct two maps with
some properties.

The next result gives a translation of the problem that only depends
on the brace structure of $G$. Recall that, for a subgroup $K$ of a
group $H$, we define the core of $K$ in $H$ as
$\core(K):=\bigcap_{g\in H} gKg^{-1}$. It is the maximal normal
subgroup of $H$ contained in $K$

\begin{theorem}\label{main}
Let $G$ be a left brace. Let $X$ be a subset of $G$, invariant by
the action $\lambda$, such that $X$ generates $G$ additively. Let
$X=\bigcup_{i\in I} X_i$ be the decomposition of $X$ as disjoint
union of orbits, and choose an element $x_i$ of $X_i$ for any $i\in
I$. Let $H_i$ be the stabilizer $\St(x_i)$ of $x_i$ in $G$. For each
$i\in I$, let $J_i$ be a nonempty set, and let $(K_{i,j})_{j\in
J_{i}}$ be a family of subgroups of $H_i$ such that $\bigcap_{i\in
I}\bigcap_{j\in J_i} \core(K_{i,j})=\{ 1\}$. Let $Y:=\bigcup_{i\in
I}\bigcup_{j\in J_i}G/K_{i,j}$ be the disjoint union of the sets of
left cosets  $G/K_{i,j}$. Then, $(Y,s)$, where $s$ is the map
$$
\begin{array}{cccc}
s\colon& Y\times Y &\longrightarrow & Y\times Y\\
& (g_1K_{i_1,j_1},~g_2K_{i_2,j_2}) &\mapsto &(f_{g_1K_{i_1,j_1}}(g_2K_{i_2,j_2}),~f^{-1}_{f_{g_1K_{i_1,j_1}}(g_2K_{i_2,j_2})}(g_1K_{i_1,j_1})),
\end{array}
$$
 with
$f_{g_1K_{i_1,j_1}}(g_2K_{i_2,j_2})=\lambda_{g_1}(x_{i_1})g_2K_{i_2,j_2}$,
is a solution of the YBE such that $\mathcal{G}(Y,s)\cong G$ as left
braces.

Moreover, any solution $(Z,t)$, with $\mathcal{G}(Z,t)\cong G$ as
left braces, is isomorphic to one of this form.
\end{theorem}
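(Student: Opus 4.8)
The plan is to reduce everything to Proposition~\ref{BenDavid}, which already characterizes the solutions with permutation group $G$ in terms of a pair of maps $(h,\sigma)$. The whole task is then to translate the combinatorial data $(X,(K_{i,j}))$ into such a pair, and conversely to recover the data from an arbitrary solution.

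\textbf{Construction direction.} On $\Z^{(Y)}$, with basis $Y=\bigsqcup_{i,j}G/K_{i,j}$, I would define $h\colon\Z^{(Y)}\to(G,+)$ on basis elements by $h(gK_{i,j})=\lambda_g(x_i)$ and extend additively, and define $\sigma(a)$, for $a\in G$, to act on the basis by left multiplication of cosets, $\sigma(a)(gK_{i,j})=(a\cdot g)K_{i,j}$, again extended additively. The verifications are mostly routine: $h$ is well defined because $K_{i,j}\subseteq H_i=\St(x_i)$ forces $\lambda_{gk}(x_i)=\lambda_g(\lambda_k(x_i))=\lambda_g(x_i)$ for $k\in K_{i,j}$; $h$ is surjective because its image contains each orbit $X_i=\{\lambda_g(x_i)\mid g\in G\}$, hence all of $X$, which generates $(G,+)$; and $\sigma$ is a homomorphism of $(G,\cdot)$ into $\aut(\Z^{(Y)})$ with each $\sigma(a)|_Y$ a bijection of $Y$. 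The one point that genuinely uses a hypothesis is injectivity of $\sigma$: $\sigma(a)=\id$ means $g^{-1}ag\in K_{i,j}$ for all $g,i,j$, i.e. $a\in\bigcap_{i,j}\core(K_{i,j})=\{1\}$, so the core condition is exactly what makes $\sigma$ injective. Finally $h(\sigma(a)(gK_{i,j}))=\lambda_{ag}(x_i)=\lambda_a(\lambda_g(x_i))=\lambda_a(h(gK_{i,j}))$ gives the compatibility. Proposition~\ref{BenDavid} then yields a solution with $\mathcal{G}(Y,s)\cong G$, and since $f_{g_1K_{i_1,j_1}}=\sigma(h(g_1K_{i_1,j_1}))=\sigma(\lambda_{g_1}(x_{i_1}))$ acts by left multiplication, its value on $g_2K_{i_2,j_2}$ is $\lambda_{g_1}(x_{i_1})g_2K_{i_2,j_2}$; the second coordinate of $s$ is determined by $f$, so $s$ matches the statement.

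\textbf{Converse.} Given a solution $(Z,t)$ with $\mathcal{G}(Z,t)\cong G$, Proposition~\ref{BenDavid} supplies a surjective $h\colon\Z^{(Z)}\to(G,+)$ and an injective $\sigma\colon(G,\cdot)\to\aut(\Z^{(Z)})$ with each $\sigma(g)|_Z$ bijective and $h(\sigma(g)(m))=\lambda_g(h(m))$. I would set $X:=h(Z)$. Equivariance of $h$ gives $\lambda_a(h(z))=h(\sigma(a)(z))\in h(Z)$, so $X$ is $\lambda$-invariant, and surjectivity of $h$ together with $Z$ being a basis shows $X$ generates $(G,+)$; thus $X$ is admissible data. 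Decompose $X=\bigsqcup_{i\in I}X_i$ into $\lambda$-orbits, choose representatives $x_i$, and set $H_i=\St(x_i)$. Viewing $Z$ as a $G$-set via $\sigma$, split it into $\sigma$-orbits; each such orbit is transitive and $h$ maps it onto a single $X_i$. For each orbit mapping to $X_i$ I would pick a base point $z$ with $h(z)=x_i$ (possible since $h$ restricts to a surjection of the orbit onto $X_i$) and set $K:=\St_\sigma(z)$, so the orbit is $G$-isomorphic to $G/K$. If $g\in K$ then $\lambda_g(x_i)=h(\sigma(g)(z))=h(z)=x_i$, so $K\subseteq H_i$; collecting these as $(K_{i,j})_{j\in J_i}$, with $J_i\neq\emptyset$ because $X_i\subseteq h(Z)$, produces exactly the required data. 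The core condition holds because $g$ acts trivially on $Z$ iff $g\in\bigcap_{i,j}\core(K_{i,j})$, and injectivity of $\sigma$ forces this intersection to be $\{1\}$.

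\textbf{From $G$-set isomorphism to isomorphism of solutions.} The base points are chosen so that the canonical $G$-set isomorphism $\Phi\colon Z\to Y$ intertwines not only the $G$-actions but also the two structure maps, $h=h'\circ\Phi$ (where $h'$ is the map built in the first part), since $h(\sigma(g)(z))=\lambda_g(h(z))=\lambda_g(x_i)=h'(gK_{i,j})$. Because both $t$ and $s$ are determined via Proposition~\ref{BenDavid} by the rule $f_\cdot=\sigma(h(\cdot))$, any map intertwining the $G$-actions and the two $h$'s automatically intertwines $f$, hence $t$ and $s$; so $\Phi$ is an isomorphism of solutions. The main obstacle is precisely this converse: all the content lies in recognizing an arbitrary solution as a disjoint union of coset spaces subject to the equivariance and core conditions, and in the compatible choice of base points that upgrades the resulting bijection from a $G$-set isomorphism to a genuine isomorphism of solutions.
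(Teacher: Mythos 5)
Your proposal is correct and follows essentially the same route as the paper's proof: both directions reduce to Proposition~\ref{BenDavid} via the same pair $(h,\sigma)$ (with $h(gK_{i,j})=\lambda_g(x_i)$ and $\sigma$ the left-multiplication action on cosets), the core condition is used exactly where the paper uses it (injectivity of $\sigma$), and the converse proceeds by the same orbit decomposition, choice of base points over each $x_i$, and upgrade of the resulting $G$-set bijection to an isomorphism of solutions. The only differences are presentational (you invoke the ``moreover'' clause of Proposition~\ref{BenDavid} where the paper re-derives $h$ and $\sigma$ from $\eta\circ\phi$, and you argue the final intertwining step abstractly rather than by the explicit chain of equalities), so no substantive comparison is needed.
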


\begin{proof}
First we shall prove that $(Y,s)$ is a solution of the YBE such that
$\mathcal{G}(Y,s)\cong G$ as left braces. We define $h\colon
Y\longrightarrow X$ by $h(gK_{i,j})=\lambda_g(x_i)$, and define
$\sigma\colon G\longrightarrow \sym_{Y}$ to be the natural action of
$G$ on $Y$ given by left multiplications on the cosets in
$G/K_{i,j}$; i.e. $\sigma(g)(xK_{i,j}):=gxK_{i,j}$. Note that $h$
extends to a unique epimorphism $h\colon
\mathbb{Z}^{(Y)}\longrightarrow (G,+)$ and, for each $g\in G$,
$\sigma(g)$ extends to a unique automorphism of $\mathbb{Z}^{(Y)}$.
Since
\begin{eqnarray*}h(\sigma(g)(xK_{i,j}))&=&h(gxK_{i,j})=\lambda_{gx}(x_i)\\
&=&\lambda_{g}(\lambda_{x}(x_i))=\lambda_{g}(h(xK_{i,j})),
\end{eqnarray*}
we have that $h(\sigma(g)(m))=\lambda_g(h(m))$, for all $g\in G$ and
$m\in \mathbb{Z}^{(Y)}$. Let $g\in G$ be an element such that
$\sigma(g)=\id$. Hence $gxK_{i,j}=xK_{i,j}$, for all $x\in G$, $i\in
I$ and $j\in J_i$. Thus $g\in \core(K_{i,j})$, for all $i\in I$ and
$j\in J_i$. Since $\bigcap_{i\in I}\bigcap_{j\in J_i}
\core(K_{i,j})=\{ 1\}$, we have $g=1$. Therefore $\sigma$ is
injective. Hence, by Proposition~\ref{BenDavid}, $(Y,s)$ is a
solution of the YBE and $\mathcal{G}(Y,s)\cong G$ as left braces.

Let $(Z,t)$ be a solution of the YBE such that
$\mathcal{G}(Z,t)\cong G$ as left braces. Let $\eta\colon
\mathcal{G}(Z,t)\longrightarrow G$ be an isomorphism of left
braces. Let $h=\eta\circ \phi$, where $\phi\colon
G(Z,t)\longrightarrow \mathcal{G}(Z,t)$ is the natural projection.
Then $h(Z)$ is a subset of $G$, invariant by $\lambda$ which
generates $G$ additively. Let $X=h(Z)$.

We also have an injective morphism $\sigma:
(G,\cdot)\longrightarrow\aut(\Z^{(Z)})$, such that
$\sigma(g)(z)=\eta^{-1}(g)(z)$, for all $g\in G$ and $z\in Z$.
Therefore $Z$ is a left $G$-set with the action induced by $\sigma$.
Let $a\in G$ and $z\in Z$. Let $g\in G(Z,t)$ such that
$\phi(g)=\eta^{-1}(a)$. We have
\begin{eqnarray*}
h(\sigma(a)(z))&=&\eta(\phi(\eta^{-1}(a)(z)))=\eta(\phi(\phi(g)(z)))\\
&=&\eta(\phi(gz-g))=\eta(\phi(g))\eta(\phi(z))-\eta(\phi(g))\\
&=&ah(z)-a=\lambda_a(h(z)).
\end{eqnarray*}
Therefore the restriction $h|_Z\colon Z\longrightarrow X$ of $h$ is
a $G$-map.

 Let $X=\bigcup_{i\in I} X_i$ be the decomposition of $X$
as disjoint union of orbits under the action $\lambda$. Since $h|_Z$
is a surjective $G$-map, for all $i\in I$, the action $\sigma$
splits $(h|_Z)^{-1}(X_i)$ into orbits:
$(h|_Z)^{-1}(X_i)=\bigcup_{j\in J_i} Z_{i,j}$ and $h(Z_{i,j})=X_i$.
So we have $Z=\bigcup_{i\in I}\bigcup_{j\in J_i} Z_{i,j}$, where
$h(Z_{i,j})=X_i$ for all $i,j$.

For each $i\in I$, we choose an element $x_i\in X_i$, and for each
$j\in J_i$, we choose $z_{i,j}\in Z_{i,j}$ such that
$h(z_{i,j})=x_i$. Note that  $\St(z_{i,j})\leq \St(x_i))\leq G$,
since any $g\in \St(z_{i,j})$ satisfies $\sigma(g)(z_{i,j})=z_{i,j}$
and, applying $h$, we obtain
$x_i=h(z_{i,j})=h(\sigma(g)(z_{i,j}))=\lambda_g(h(z_{i,j}))=\lambda_g(x_i)$,
so $g\in \St(x_i)$.

Recall that the maps $f_i\colon X_{i}\rightarrow G/\St(x_i)$ and
$f_{i,j}\colon Z_{i,j}\rightarrow G/\St(z_{i,j})$ defined by
$f_{i}(\lambda_g(x_i))= g\St(x_i)$ and $f_{i,j}(\sigma(g)(z_{i,j}))=
g\St(z_{i,j})$ are isomorphisms of $G$-sets.
 Then,
$h\mid_{Z_{i,j}}$ is determined by the canonical projection
$$
\begin{array}{cccc}
\pi \colon &G/\St(z_{i,j})&\to &G/\St(x_i)\\
&t\St(z_{i,j})&\mapsto &t\St(x_i),
\end{array}
$$
that is, \begin{eqnarray*}
h(f_{i,j}^{-1}(g\St(z_{i,j})))&=&h(\sigma(g)(z_{i,j}))=\lambda_g(h(z_{i,j}))\\
&=&
\lambda_g(x_{i})=f_i^{-1}(g\St(x_i))\\
&=&f_i^{-1}(\pi(g\St(z_{i,j}))) \end{eqnarray*}

Note that $\sigma(h)(z)=z$, for all $z\in Z$, if and only if
$hg\St(z_{i,j})=g\St(z_{i,j})$, for all $g\in G$ and all $i,j$, i.e.
if and only if $h\in\bigcap_{i,j} \core(\St(z_{i,j}))$. Since
$\sigma$ is injective, we have $\bigcap_{i,j} \core(\St(z_{i,j}))=\{
1\}$. Defining $H_i:=\St(x_i)$ and $K_{i,j}:=\St(z_{i,j})$, let
$(Y,s)$ the solution defined as in the statement of
Theorem~\ref{main}. We shall prove that $(Y,s)\cong (Z,t)$. Let
$f\colon Z\longrightarrow Y$ be the map defined by
$f(z)=f_{i,j}(z)$, for all $z\in Z_{i,j}$. Clearly $f$ is bijective.
Let $x,z\in Z$. We may assume that $z\in Z_{i,j}$ and $x\in
Z_{p,q}$. Therefore there exist $g_1,g_2\in G$ such that
$z=\sigma(g_1)(z_{i,j})$ and $x=\sigma(g_2)(z_{p,q})$. We have
\begin{eqnarray*}
f(\phi(x)(z))&=&f(\sigma(h(x))(z))=f_{i,j}(\sigma(h(x))(\sigma(g_1)(z_{i,j})))\\
&=&f_{i,j}(\sigma(h(x)g_1)(z_{i,j}))=h(x)g_1\St(z_{i,j})\\
&=&h(x)g_1K_{i,j}=h(\sigma(g_2)(z_{p,q})g_1K_{i,j}\\
&=&\lambda_{g_2}(h(z_{p,q})g_1K_{i,j}=\lambda_{g_2}(x_p)g_1K_{i,j}\\
&=&f_{g_2K_{p,q}}(g_1K_{i,j})\\
&=&f_{f(x)}(f(z)).
\end{eqnarray*}
Therefore $f$ is an isomorphism of solutions of the YBE.
\end{proof}

Summarizing, given a left brace $G$, to construct all the solutions
$(Y,s)$ of the YBE such that $\mathcal{G}(Y,s)\cong G$ as left
braces one can proceed as follows:
\begin{enumerate}
\item Find the decomposition of $G$ as disjoint union of orbits, $G=\bigcup_{i\in K} G_i$, by the action $\lambda\colon (G,\cdot)\longrightarrow\aut(G,+)$.
Then choose one element $x_i$ in each orbit $G_i$ for all $i\in
K$.

\item Find all the subsets $I$ of $K$ such that the subset
$X=\bigcup_{i\in I}G_i$ generates the additive group of $G$.

\item Given such an $X$, find for each $i\in I$ a non-empty family $(K_{i,j})_{j\in
J_i}$ of subgroups of $\St(x_i)$ such that $\bigcap_{i,j}
\core(K_{i,j})=\{ 1\}$. Note that the $K_{i,j}$ could be equal for
different $(i,j)$.

\item Construct a solution as in the statement of
Theorem~\ref{main} using the families $(K_{i,j})_{j\in J_i}$, for
$i\in I$.
\end{enumerate}

Note that by Theorem~\ref{main}, any solution $(Y,s)$ of the YBE
such that $\mathcal{G}(Y,s)\cong G$ as left braces is isomorphic to
one constructed in this way. It could happen that different
solutions of the YBE constructed in this way from a left brace $G$ are in
fact isomorphic. In the next section we characterize when two of
these solutions are isomorphic.


\section{Isomorphism of solutions}

We begin with a left brace $G$ with orbits $\{G_i\}_{i\in I}$ under
the action $\lambda\colon G\rightarrow \Aut(G,+)$ defined as usual
$\lambda(g)=\lambda_g$. Choose an element $x_i\in G_i$ in each orbit
$G_i$. Let $I_1, I_2$ be subsets of $I$, such that $X_1=\cup_{i\in
I_1}G_i$ and $X_2=\cup_{j\in I_2}G_j$ satisfy $G=\langle
X_1\rangle_+=\langle X_2\rangle_+$. For each $i\in I_1$, let $\{
K_{i,k} \}_{k\in A_i}$ be a non-empty family of subgroups of
$\St_G(x_i)$ such that
$$\bigcap_{i\in I_1}\bigcap_{k\in A_i}\core(K_{i,k})=\{ 1\}.$$
For each $j\in I_2$, let $\{ L_{j,l} \}_{l\in B_j}$ be a non-empty
family of subgroups of $\St_G(x_j)$ such that
$$\bigcap_{j\in I_2}\bigcap_{l\in B_j}\core(L_{j,l})=\{ 1\}.$$
Let $Y_1$ be the disjoint union of the family of left $G$-sets
$G/K_{i,k}$, for $i\in I_1$ and $k\in A_i$. Let $Y_2$ be the
disjoint union of the family of left $G$-sets $G/L_{j,l}$, for $j\in
I_2$ and $l\in B_j$. Let $s_1\colon Y_1^2\rightarrow Y_1^2$
and $s_2\colon Y_2^2\rightarrow Y_2^2$ be maps defined by
\begin{eqnarray*}
s_1(g_1K_{i_1,k_1},g_2K_{i_2,k_2})&=&(\lambda_{g_1}(x_{i_1})g_2K_{i_2,k_2},\lambda_{\lambda_{g_1}(x_{i_1})g_2}(x_{i_2})^{-1}g_1K_{i_1.k_1}),\\
s_2(g_1L_{j_1,l_1},g_2L_{j_2,l_2})&=&(\lambda_{g_1}(x_{j_1})g_2L_{j_2,l_2},\lambda_{\lambda_{g_1}(x_{j_1})g_2}(x_{j_2})^{-1}g_1L_{j_1.l_1}).
\end{eqnarray*}
We know that $(Y_1,s_1)$ and $(Y_2,s_2)$ are solutions of the YBE
(and any solution of the YBE is isomorphic to one constructed in
this way). We shall characterize when $(Y_1,s_2)$ and $(Y_2,s_2)$
are isomorphic in the following result.

\begin{theorem}\label{isomorphism}
The solutions $(Y_1,s_1)$ and $(Y_2,s_2)$ are isomorphic if and only
if there exist an automorphism $\psi$ of the left brace $G$, a
bijective map $\alpha\colon I_1\rightarrow I_2$, a bijective map
$\beta_i\colon A_i\rightarrow B_{\alpha(i)}$ and $z_{i,k}\in G$,
for each $i\in I_1$ and $k\in A_i$, such that
$$\psi(x_i)=\lambda_{z_{i,k}}(x_{\alpha(i)})\quad\mbox{and}\quad \psi(K_{i,k})=z_{i,k}L_{\alpha(i),\beta_i(k)}z_{i,k}^{-1},$$
for all $i\in I_1$ and $k\in A_i$.
\end{theorem}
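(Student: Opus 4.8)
The plan is to prove both implications by translating everything into the language of $\psi$-equivariant maps of $G$-sets, using throughout the elementary fact that an automorphism of the left brace $G$ commutes with the action $\lambda$. Indeed, if $\psi\in\aut(G)$ as a left brace, then for all $g,x\in G$,
\[
\psi(\lambda_g(x))=\psi(gx-g)=\psi(g)\psi(x)-\psi(g)=\lambda_{\psi(g)}(\psi(x)),
\]
so $\psi$ permutes the $\lambda$-orbits of $G$. I will also use the two $G$-maps $h_1\colon Y_1\to X_1$, $h_1(gK_{i,k})=\lambda_g(x_i)$, and $h_2\colon Y_2\to X_2$, $h_2(gL_{j,l})=\lambda_g(x_j)$, which satisfy $h_t(g\cdot y)=\lambda_g(h_t(y))$ and through which the first component of $s_t$ is the natural $G$-action $f_y=\sigma_t(h_t(y))$. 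Since the solutions are non-degenerate and involutive, a bijection $F\colon Y_1\to Y_2$ is an isomorphism of solutions if and only if $F(f_{y_1}(y_2))=f_{F(y_1)}(F(y_2))$ for all $y_1,y_2$, which is the only identity I will need to verify.

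For the \emph{if} direction, given $\psi,\alpha,\beta_i,z_{i,k}$, I would define $F$ blockwise by $F(gK_{i,k}):=\psi(g)z_{i,k}L_{\alpha(i),\beta_i(k)}$. The relation $\psi(K_{i,k})=z_{i,k}L_{\alpha(i),\beta_i(k)}z_{i,k}^{-1}$ makes $F$ well defined on cosets, and since $\alpha$ and the $\beta_i$ are bijections, $F$ is a bijection $Y_1\to Y_2$. By construction $F(g\cdot y)=\psi(g)\cdot F(y)$, i.e. $F$ is $\psi$-equivariant. The key computation is then
\[
h_2(F(gK_{i,k}))=\lambda_{\psi(g)z_{i,k}}(x_{\alpha(i)})=\lambda_{\psi(g)}(\lambda_{z_{i,k}}(x_{\alpha(i)}))=\lambda_{\psi(g)}(\psi(x_i))=\psi(\lambda_g(x_i))=\psi(h_1(gK_{i,k})),
\]
where the third equality uses $\psi(x_i)=\lambda_{z_{i,k}}(x_{\alpha(i)})$ and the fourth uses the displayed commutation identity. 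Hence $h_2\circ F=\psi\circ h_1$, and combining this with equivariance gives $F(f_{y_1}(y_2))=F(h_1(y_1)\cdot y_2)=\psi(h_1(y_1))\cdot F(y_2)=h_2(F(y_1))\cdot F(y_2)=f_{F(y_1)}(F(y_2))$, so $F$ is an isomorphism of solutions.

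For the \emph{only if} direction, I start from an isomorphism of solutions $F\colon Y_1\to Y_2$. It extends to a brace isomorphism of the structure groups $G(Y_1,s_1)\to G(Y_2,s_2)$ (sending each basis element $y$ to $F(y)$, which respects the defining relations and carries one free abelian additive basis to the other), and hence descends to a brace isomorphism $\mathcal{G}(Y_1,s_1)\to\mathcal{G}(Y_2,s_2)$, namely conjugation by $F$. Under the canonical identifications $\mathcal{G}(Y_t,s_t)\cong G$ of Proposition~\ref{BenDavid} this produces an automorphism $\psi$ of the left brace $G$ characterized by $F(g\cdot y)=\psi(g)\cdot F(y)$; tracking $f_y=\sigma_t(h_t(y))$ through the same identifications yields the compatibility $h_2\circ F=\psi\circ h_1$. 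From here everything is orbit-and-stabilizer bookkeeping. Since $F$ is $\psi$-equivariant and $\psi$ is an automorphism, $F$ carries each $G$-orbit of $Y_1$, namely each block $G/K_{i,k}$, bijectively onto a $G$-orbit of $Y_2$, a block $G/L_{j,l}$. Applying $h_2\circ F=\psi\circ h_1$ to such a block shows its image block has first index $j$ with $G_j=\psi(G_i)$; as $\psi$ permutes $\lambda$-orbits, this defines a bijection $\alpha\colon I_1\to I_2$ with $\psi(G_i)=G_{\alpha(i)}$, and recording the second index defines bijections $\beta_i\colon A_i\to B_{\alpha(i)}$. Finally, writing $F(K_{i,k})=z_{i,k}L_{\alpha(i),\beta_i(k)}$ for a representative $z_{i,k}$, the stabilizer of the coset $K_{i,k}$ is $K_{i,k}$ while that of $z_{i,k}L_{\alpha(i),\beta_i(k)}$ is $z_{i,k}L_{\alpha(i),\beta_i(k)}z_{i,k}^{-1}$, so $\psi$-equivariance forces $\psi(K_{i,k})=z_{i,k}L_{\alpha(i),\beta_i(k)}z_{i,k}^{-1}$; evaluating $h_2\circ F=\psi\circ h_1$ at the coset $K_{i,k}$ gives $\psi(x_i)=\lambda_{z_{i,k}}(x_{\alpha(i)})$.

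I expect the main obstacle to be the first part of the \emph{only if} direction: cleanly producing the brace automorphism $\psi$ together with the two compatibility properties $F(g\cdot y)=\psi(g)\cdot F(y)$ and $h_2\circ F=\psi\circ h_1$, i.e. upgrading ``$F$ is an isomorphism of solutions'' to a statement purely about the $G$-action and the maps $h_t$. Once these are in place, the identification of $\alpha$, $\beta_i$ and $z_{i,k}$ is routine, the only points needing care being that the blocks $G/K_{i,k}$ are exactly the $G$-orbits of $Y_1$ and that stabilizers of cosets recover the subgroups up to the conjugation appearing in the statement.
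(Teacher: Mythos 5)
Your proof is correct, and the \emph{only if} direction takes a genuinely different route from the paper's. The paper constructs $\psi$ from scratch by a presentation argument: it writes $F(gK_{i,k})=\varphi(gK_{i,k})L_{\alpha(g,i,k),\beta(g,i,k)}$, uses the morphism identity repeatedly together with the trivial-core hypothesis on the $L_{j,l}$ to show $\alpha$ and $\beta$ are independent of $g$, then defines $\psi$ on the multiplicative generators $\lambda_g(x_i)$ by $\psi(\lambda_g(x_i))=\lambda_{\varphi(gK_{i,k_i})}(x_{\alpha(1,i,k_i)})$, checks that relations are preserved (again via the trivial cores), and verifies additivity of $\psi$ by a direct computation with $g+x_i=g\lambda_{g^{-1}}(x_i)$. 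You instead obtain $\psi$ in one stroke from the functoriality of the permutation brace: an isomorphism of solutions induces a brace isomorphism $G(Y_1,s_1)\to G(Y_2,s_2)$, hence $\mathcal{G}(Y_1,s_1)\to\mathcal{G}(Y_2,s_2)$ by conjugation, and composing with the identifications $g\mapsto\sigma_t(g)|_{Y_t}$ of Proposition~\ref{BenDavid} gives $\psi\in\Aut(G)$ with $F(g\cdot y)=\psi(g)\cdot F(y)$; after that, $\alpha$, $\beta_i$, $z_{i,k}$ and the two displayed identities really are just orbit--stabilizer bookkeeping, exactly as you say. This is cleaner and more conceptual, and it localizes all the work in one lemma that you correctly flag but do not fully prove: that $y\mapsto F(y)$ extends to a \emph{brace} isomorphism of structure groups. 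The multiplicative part is immediate from the defining relations; the additive part needs the cocycle compatibility $\pi_2\circ\tilde F=F_*\circ\pi_1$, which follows by induction on word length from $\phi_2(\tilde F(g))=F\circ\phi_1(g)\circ F^{-1}$ (itself proved on generators by the isomorphism condition). That lemma is standard and true, but it is the one piece you would have to write out; note also that your derivation of $h_2\circ F=\psi\circ h_1$ silently uses the injectivity of $\sigma_2$, i.e.\ the same trivial-core hypothesis the paper leans on. Your \emph{if} direction coincides with the paper's (which leaves the verification as ``easy to check''), and your observation that checking the first component of $s$ suffices for involutive solutions of this form is correct.
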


\begin{proof}
Suppose that there exist an automorphism $\psi$ of the left brace
$G$, a bijective map $\alpha\colon I_1\rightarrow I_2$, a bijective
map $\beta_i\colon A_i\rightarrow B_{\alpha(i)}$
 and $z_{i,k}\in G$, for $i\in I_1$, and for $k\in A_i$, such that
$$\psi(x_i)=\lambda_{z_{i,k}}(x_{\alpha(i)})\quad\mbox{and}\quad \psi(K_{i,k})=z_{i,k}L_{\alpha(i),\beta_i(k)}z_{i,k}^{-1}.$$
Observe that we also have $\psi(\lambda_g(x_i))=\lambda_{\psi(g)z_{i,k}}(x_{\alpha(i)})$ for every $g$, because
$\psi$ is a morphism of braces.
We define $F\colon Y_1\rightarrow Y_2$ by
$F(gK_{i,k})=\psi(g)z_{i,k}L_{\alpha(i),\beta_i(k)}$, for all $i\in
I_1$, $k\in A_i$ and $g\in G$. Since
$\psi(K_{i,k})=z_{i,k}L_{\alpha(i),\beta_i(k)}z_{i,k}^{-1}$, $F$ is
well defined. It is easy to check that $F$ is an isomorphism of the
solutions $(Y_1,s_1)$ and $(Y_2,s_2)$.

Conversely, suppose that there exists an isomorphism $F\colon
Y_1\rightarrow Y_2$ of the solutions $(Y_1,s_1)$ and $(Y_2,s_2)$. We
can write
$F(gK_{i,k})=\varphi(gK_{i,k})L_{\alpha(g,i,k),\beta(g,i,k)}$, for
some maps $\varphi\colon Y_1\rightarrow G$, $\alpha\colon
Y_1\rightarrow I_2$ and $\beta \colon Y_1\rightarrow \bigcup_{j\in
I_2}B_{j}$. We shall prove that $\alpha(g,i,k)=\alpha(1,i,k')$ and
$\beta(g,i,k)=\beta(1,i,k)$, for all $g\in G$, $i\in I_1$ and
$k,k'\in A_i$. Since $F$ is a morphism of solutions of the YBE, we
have
\begin{eqnarray}\label{F}
&&F(\lambda_{g_1}(x_{i_1})g_2K_{i_2,k_2})
=\lambda_{\varphi(g_1K_{i_1,k_1})}(x_{\alpha(g_1,i_1,k_1)})F(g_2K_{i_2,k_2}),\end{eqnarray}
for all $g_1,g_2\in G$, $i_1,i_2\in I_1$,  $k_1\in A_{i_1}$
and $k_2\in A_{i_2}$. Hence
\begin{eqnarray*}\lefteqn{\varphi(\lambda_{g_1}(x_{i_1})g_2K_{i_2,k_2})L_{\alpha(\lambda_{g_1}(x_{i_1})g_2,i_2,k_2),\beta(\lambda_{g_1}(x_{i_1})g_2,i_2,k_2)}}
\\
&&=\lambda_{\varphi(g_1K_{i_1,k_1})}(x_{\alpha(g_1,i_1,k_1)})\varphi(g_2K_{i_2,k_2})L_{\alpha(g_2,i_2,k_2),\beta(g_2,i_2,k_2)},\end{eqnarray*}
for all $g_1,g_2\in G$, $i_1,i_2\in I_1$,  $k_1\in A_{i_1}$
and $k_2\in A_{i_2}$. Thus
$\alpha(\lambda_{g_1}(x_{i_1})g_2,i_2,k_2)=\alpha(g_2,i_2,k_2)$ and
$\beta(\lambda_{g_1}(x_{i_1})g_2,i_2,k_2)=\beta(g_2,i_2,k_2)$. Since
$G=\langle X_1\rangle_+$ and $X_1$ is $G$-invariant (by the action
$\lambda$), we know that $X_1$ also generates the multiplicative
group of $G$. Therefore $\alpha(g_2,i_2,k_2)=\alpha(1,i_2,k_2)$ and
$\beta(g_2,i_2,k_2)=\beta(1,i_2,k_2)$. Note also that
$$\lambda_{\varphi(g_1K_{i_1,k_1})}(x_{\alpha(g_1,i_1,k_1)})F(g_2K_{i_2,k_2})=\lambda_{\varphi(g_1K_{i_1,k})}(x_{\alpha(g_1,i_1,k)})F(g_2K_{i_2,k_2}),$$
for all $g_1,g_2\in G$, $i_1,i_2\in I_1$,  $k_1,k\in A_{i_1}$ and
$k_2\in A_{i_2}$. Since $\bigcap_{j\in I_2}\bigcap_{l\in
B_j}\core(L_{j,l})=\{ 1\}$, we have that
$$\lambda_{\varphi(g_1K_{i_1,k_1})}(x_{\alpha(g_1,i_1,k_1)})=\lambda_{\varphi(g_1K_{i_1,k})}(x_{\alpha(g_1,i_1,k)}),$$
for all $g_1\in G$, $i_1\in I_1$ and  $k_1,k\in A_{i_1}$. Therefore
$x_{\alpha(g_1,i_1,k_1)},x_{\alpha(g_1,i_1,k)}\in
G_{\alpha(1,i_1,k)},$ for all $g_1\in G$, $i_1\in I_1$ and $k_1,k\in
A_{i_1}$ and thus $\alpha(g,i,k)=\alpha(1,i,k')$, for all $g\in G$,
$i\in I_1$ and $k,k'\in A_{i}$. For each $i\in I_1$ we choose an
element $k_i\in A_{i}$. Since $F$ is bijective, the map
$I_1\rightarrow I_2$ defined by $i\mapsto \alpha(1,i,k_i)$ is
bijective and for each $i\in I_1$ the map $A_i\rightarrow
B_{\alpha(1,i,k_i)}$ defined by $k\mapsto \beta(1,i,k)$ is
bijective. We shall see that there exists an automorphism $\psi$ of
the left brace $G$ such that
$$\psi(\lambda_g(x_i))=\lambda_{\psi(g)\varphi(K_{i,k_i})}(x_{\alpha(1,i,k_i)})\quad\mbox{and}\quad \psi(K_{i,k})=\varphi(K_{i,k_i})L_{\alpha(1,i,k_i),\beta(1,i,k)}\varphi(K_{i,k_i})^{-1},$$
for all $g\in G$, $i\in I_1$ and $k\in A_i$. Let
$1=\lambda_{g_1}(x_{i_1})^{\varepsilon_1}\cdots
\lambda_{g_m}(x_{i_m})^{\varepsilon_m}$, for some $g_1,\dots ,g_m\in
G$, $i_1,\dots ,i_m\in I_1$ and $\varepsilon_1,\dots
,\varepsilon_m\in \{ 1,-1\}$. By (\ref{F}), we have
\begin{eqnarray*}
F(gK_{i,k})&=&F(\lambda_{g_1}(x_{i_1})^{\varepsilon_1}\cdots
\lambda_{g_m}(x_{i_m})^{\varepsilon_m}gK_{i,k})\\
&=&\lambda_{\varphi(g_1K_{i_1,k_{i_1}})}(x_{\alpha(1,i_1,k_{i_1})})^{\varepsilon_1}F(\lambda_{g_2}(x_{i_2})^{\varepsilon_2}\cdots
\lambda_{g_m}(x_{i_m})^{\varepsilon_m}gK_{i,k})\\
&=&\lambda_{\varphi(g_1K_{i_1,k_{i_1}})}(x_{\alpha(1,i_1,k_{i_1})})^{\varepsilon_1}\cdots
\lambda_{\varphi(g_mK_{i_m,k_{i_m}})}(x_{\alpha(1,i_m,k_{i_m})})^{\varepsilon_m}F(gK_{i,k}),
\end{eqnarray*}
for all $g\in G$, $i\in I_1$ and $ k\in A_i$. Since $\bigcap_{j\in
I_2}\bigcap_{l\in B_j}\core(L_{j,l})=\{ 1\}$, we have that
$\lambda_{\varphi(g_1K_{i_1,k_{i_1}})}(x_{\alpha(1,i_1,k_{i_1})})^{\varepsilon_1}\cdots
\lambda_{\varphi(g_mK_{i_m,k_{i_m}})}(x_{\alpha(1,i_m,k_{i_m})})^{\varepsilon_m}=1$.
Therefore there exists a unique morphism $\psi\colon G\rightarrow G$
of multiplicative groups such that
$\psi(\lambda_g(x_i))=\lambda_{\varphi(gK_{i,k_{i}})}(x_{\alpha(1,i,k_i)})$.
Since $X_1$ generates the multiplicative group of $G$, by (\ref{F})
one can see that
$$\varphi(gK_{i,k})L_{\alpha(1,i,k_i),\beta(1,i,k)}=F(gK_{i,k})=\psi(g)\varphi(K_{i,k})L_{\alpha(1,i,k_i),\beta(1,i,k)}.$$
Therefore, since $L_{\alpha(1,i,k_i),\beta(1,i,k)}\subseteq
\St(x_{\alpha(1,i,k_i)})$, we have
$$\lambda_{\varphi(gK_{i,k})}(x_{\alpha(1,i,k_i)})=\lambda_{\psi(g)\varphi(K_{i,k})}(x_{\alpha(1,i,k_i)}).$$
Hence
$\psi(\lambda_g(x_i))=\lambda_{\psi(g)\varphi(K_{i,k_i})}(x_{\alpha(1,i,k_i)})$.
Now we have that
\begin{eqnarray*}
\psi(g+x_i)&=&\psi(g\lambda_{g^{-1}}(x_i))=\psi(g)\psi(\lambda_{g^{-1}}(x_i))\\
&=&\psi(g)\lambda_{\psi(g)^{-1}\varphi(K_{i,k_i})}(x_{\alpha(1,i,k_i)})\\
&=&\psi(g)\lambda_{\psi(g)^{-1}}(\lambda_{\varphi(K_{i,k_i})}(x_{\alpha(1,i,k_i)}))\\
&=&\psi(g)+\lambda_{\varphi(K_{i,k_i})}(x_{\alpha(1,i,k_i)})\\
&=&\psi(g)+\psi(\lambda_{1}(x_i))=\psi(g)+\psi(x_i).
\end{eqnarray*}
Now it is easy to see that $\psi$ is a morphism of left braces.
Since $F$ is bijective and $F(gg'K_{i,k})=\psi(g)F(g'K_{i,k})$, it
follows that $\psi$ is bijective. Furthermore $g\in K_{i,k}$ if and
only if
\begin{eqnarray*}
\varphi(K_{i,k})L_{\alpha(1,i,k_i),\beta(1,i,k)}&=&F(K_{i,k})=F(gK_{i,k})=\psi(g)F(K_{i,k})\\
&=&\psi(g)\varphi(K_{i,k})L_{\alpha(1,i,k_i),\beta(1,i,k)}.
\end{eqnarray*}
Therefore the result follows.
\end{proof}

Summarizing, the last theorem says that two solutions constructed as
in Theorem~\ref{main} are isomorphic if we can find an automorphism
of the left brace $G$ that brings each $K_{i,k}$ to one $L_{j,l}$,
taking into account that maybe the $L_{j,l}$'s are permuted (that is
the reason for the $\alpha$ and $\beta_i$ maps), and that maybe we
have chosen another element of the orbit in the process (that is the
reason why the image $x_i$ is $\lambda_{z_{i,k}}(x_{\alpha(i)})$ and
not just $x_{\alpha(i)}$, and it is also the reason why the
$L_{\alpha(i),\beta_i(k)}$ is conjugated by $z_{i,k}$).

The following is an example of how to use Theorems \ref{main} and
\ref{isomorphism} to compute all the finite solutions
associated to a given finite left brace up to isomorphism. We
use the easiest examples of braces: trivial braces of order $p$,
where $p$ is a prime.

\begin{example}\label{bracesp}
 Consider the trivial brace over $G=\Z/(p)$. Then, the orbits are $\{\alpha\}$ for every $\alpha\in\Z/(p)$.
 Since any orbit has one element, then $\St(\alpha)=G$, and the possible $K_{i,j}$'s in this case are $0$ and $G$.

Let $X$ be a subset of $\Z/(p)$ with at least a nonzero
element.
 Let $K_{\alpha,j}=G$ for $\alpha\in X$ and $j\in\{1,\dots,k_\alpha\}$, and let $K'_{\alpha,k}=0$ for
 $\alpha\in X$ and $k\in\{1,\dots,m_\alpha\}$, where $k_{\alpha}$ and $m_{\alpha}$ are non-negative integers such that $k_{\alpha}+m_{\alpha}>0$.
 Write $G/K_{\alpha,j}=\{y_{\alpha,j}\}$, and
 $G/K'_{\alpha,k}=\{y^1_{\alpha,k},\dots,y^p_{\alpha,k}\}$, where
 $y^{l}_{\alpha,k}=l+K'_{\alpha,k}$. Assume that at least one $m_{\alpha}$ is positive. Then the corresponding solution
 of the YBE is $(Y,r)$, where
$$
Y=\bigcup_{\alpha\in X}\left(\bigcup_{1\leq j\leq k_{\alpha}}\{
y_{\alpha,j}\}\right)\cup \left(\bigcup_{1\leq k\leq m_{\alpha}}
\{y^1_{\alpha,k},\dots ,y^p_{\alpha,k}\}\right)
$$
and $r(x,y)=(\sigma_x(y),\sigma^{-1}_{\sigma_{x}(y)}(x))$, with the
sigma maps given by
$$
\sigma_{y_{\alpha,j}}=\sigma_{y^l_{\alpha,k}}=\tau^\alpha\text{, for
all } \alpha\in X,\text{ for all } j,k \text{ and for all }
l\in\{1,\dots,p\},
$$
where $\tau\in \Sym_Y$ is the product of all the cycles of length
$p$ $(y^1_{\alpha,k},y^2_{\alpha,k},\dots,y^p_{\alpha,k})$ for any
$\alpha\in X$ and $k\in\{1,\dots,m_\alpha\}$.

Finally observe that, in this case, $\aut(G,+,\cdot)=\aut(G,+)\cong (\Z/(p))^*$,
and the effect of an automorphism of $G$ over a solution is to change
$\sigma_{y_{\alpha,j}}=\sigma_{y^l_{\alpha,k}}=\tau^\alpha$ to the isomorphic solution
$\sigma_{y_{\alpha,j}}=\sigma_{y^l_{\alpha,k}}=\tau^{A\alpha}$, where $A\in(\Z/(p))^*$.
\end{example}


\section{Basic solutions and examples}
It seems difficult to apply Theorems \ref{main} and \ref{isomorphism} to construct and
classify all the solutions $(Y,s)$ of the YBE with isomorphic left brace $\mathcal{G}(Y,s)$
because there are a lot of different solutions (for instance, there is a lot of freedom
choosing the subgroups $K_{i,j}$). Maybe an
easier problem is to classify a smaller class of solutions from
which we can recover all the other solutions. For example, consider
the following ways to obtain new solutions from a given one:
\begin{enumerate}
 \item Adding a new $K$ (and the corresponding $X_i$, if necessary) to $\{K_{i,j}\}$: this gives a
 solution because the intersection of the cores remains trivial when we add another subgroup, and
 $X$ still generates $G$ if we add a new $X_i$.
 \item Changing a $K_{i,j}$ by a $K\leq K_{i,j}$: this gives a solution because the intersection
 of the cores remains trivial when we change one of the $K_{i,j}$ by a smaller subgroup, and we are
 not changing $X$.
\end{enumerate}

Thus maybe a good definition for basic solution is the converse of
constructions 1 and 2: a solution in which is impossible to take out
a $K_{i,j}$ or to change a $K_{i,j}$ by a $K_{i,j}\leq K'_{i,j}\leq
H_i$ without losing the property of being a solution. From these
solutions, and using construction 1 and 2, we can recover all the
other solutions. Note that these class of solutions can be also
described as the solutions $(X,r)$ such that every surjective
morphism of solutions $(X,r)\to (Y,s)$, to another solution $(Y,s)$
with $\mathcal{G}(X,r)\cong G(Y,s)$, is an isomorphism.

\begin{example}
 In the case of Example \ref{bracesp}, the basic solutions are, up to automorphism of $G$:
 \begin{enumerate}
  \item $X_1=\{1\}$, $K_{1,1}=0$. It corresponds to the solution over the set $\{ 1,\dots ,p\}$
  given by $\sigma_1=\dots=\sigma_p=(1,2,\dots,p)$.
  \item $\{X_1=\{0\}, X_2=\{1\}\}$, $\{K_{1,1}=0, K_{2,1}=G\}$. It corresponds to the solution over
   the set $\{ 1,\dots, p+1\}$ given by $\sigma_1=\dots=\sigma_p=\id$, $\sigma_{p+1}=(1,2,\dots,p)$.
 \end{enumerate}
\end{example}

So, when we only consider the class of basic solutions, the
classification for braces of order $p$ turns out to be much easier.
This example brings some hope that maybe it is possible to classify
the basic solutions associated to any finite left brace $G$.

We are not able to solve this problem in general because our
knowledge of the brace structure is still limited. Nevertheless, we
will classify now the basic solutions for some concrete examples of
finite left braces. Note that good candidates are multiplicative
groups such that the intersection of all their non-trivial subgroups
is non-trivial, because in this case we always need some $K_{i,j}$
equal to $\{ 1\}$, and this condition might restrict the
possible basic solutions. Groups with this property must be
$p$-groups, and they have been classified in \cite[Theorem 5.4.10
(ii)]{Gorenstein}: they are the cyclic $p$-groups and the
generalized quaternion groups $Q_{2^m}$ of order a power of $2$.

We shall study first the cyclic case.
We need a complete knowledge of all the possible brace structures with
cyclic multiplicative group. First of all, the next proposition describes
the additive group in this case.

\begin{proposition}{(converse result to Rump's classification)}
Let $G$ be a left brace with $(G,\cdot)\cong \Z/(p^n)$ and $n\geq 3$. Then, $(G,+)\cong \Z/(p^n)$.
\end{proposition}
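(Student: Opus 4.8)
The plan is to fix a multiplicative generator $g$ of $(G,\cdot)\cong\Z/(p^n)$ and to analyze $(G,+)$ as a module under the action of $\lambda_g$. Writing $\Lambda:=\lambda_g$ and using that $\lambda$ is a multiplicative homomorphism (so $\lambda_{g^k}=\Lambda^k$), together with $x\cdot y=\lambda_x(y)+x$ and the identity $0=1$ valid in any left brace, an easy induction gives $g^k=(1+\Lambda+\Lambda^2+\cdots+\Lambda^{k-1})(g)$ in $(G,+)$. In particular the $\Lambda$-orbit of $g$ generates $(G,+)$, so $(G,+)$ is a cyclic module over the commutative subring $\Z[\Lambda]\subseteq\End(G,+)$, generated by $g$. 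Consequently $(G,+)$ is cyclic as an abelian group precisely when $s:=\dim_{\mathbb{F}_p}(G/pG)$, its number of invariant factors, equals $1$, and the whole statement reduces to proving $s=1$.

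Next I would record the structural constraints coming from the brace axioms. Since $\lambda$ sends the cyclic group $(G,\cdot)$ into $\Aut(G,+)$, its image $\langle\Lambda\rangle$ is cyclic of some order $p^m$, and $\soc(G)=\Ker\lambda=\langle g^{p^m}\rangle$. On $\soc(G)$ one has $\lambda_h=\id$, hence $h\cdot x=x+h$, so multiplication and addition coincide there and $(\soc(G),+)=(\soc(G),\cdot)\cong\Z/(p^{n-m})$ is cyclic; in particular $g^{p^m}$ has additive order $p^{n-m}$. Setting $N:=\Lambda-1$, reduction modulo $p$ shows that $\overline\Lambda$ is unipotent on $G/pG$, so $N$ is nilpotent on $(G,+)$ and $G/pG\cong\mathbb{F}_p[t]/(t^s)$ as an $\mathbb{F}_p[\overline N]$-module, with $\overline g,\overline N\overline g,\dots,\overline N^{\,s-1}\overline g$ a basis; thus $s=1$ is equivalent to $\overline N\overline g=0$.

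The engine of the proof is to feed the multiplicative order of $g$ into these module data. From $g^{p^k}=(1+\Lambda+\cdots+\Lambda^{p^k-1})(g)=\sum_{j\ge 0}\binom{p^k}{j+1}N^j(g)$ together with Kummer's formula $v_p\!\binom{p^k}{j+1}=k-v_p(j+1)$, the only term whose coefficient is a $p$-adic unit is $N^{p^k-1}(g)$, every other term lying in $pG$. Working not merely modulo $pG$ but along the whole filtration $G\supseteq pG\supseteq\cdots\supseteq p^eG=0$, and combining the relation $g^{p^n}=0$ with $g^{p^{n-1}}\neq 0$ (the latter controlled through the additive order $p$ of $g^{p^{n-1}}$ computed inside $\soc(G)$), the aim is to show that $s\ge 2$ is incompatible with $n\ge 3$, i.e.\ that $\overline N\overline g=0$.

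The main obstacle is exactly this last $p$-adic bookkeeping: the valuations $v_p\!\binom{p^k}{j+1}$ vary delicately, so one must track in which layer $p^iG/p^{i+1}G$ each term $N^j(g)$ actually lives rather than merely reducing modulo $p$, and the low-order coefficients $\binom{p}{2},\binom{p}{3},\dots$ behave differently for $p=2$, so that case will need separate treatment (which is presumably why the hypothesis is phrased with a prime $p$ and why $n\ge 3$ is needed). An alternative organization that isolates the same difficulty is induction on $n$ via Proposition~\ref{extensions}: the quotient $G/\soc(G)$ is again a brace with cyclic multiplicative group of smaller order, so induction makes both $\soc(G)$ and $(G,+)/(\soc(G),+)$ cyclic, after which it remains to exclude a non-cyclic extension by proving $G[p]\subseteq\soc(G)$ — equivalently that $g^k$ has additive order $p$ only when $p^m\mid k$ — which is once more the crux computation above.
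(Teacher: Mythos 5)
Your setup is correct as far as it goes: the identity $g^k=(1+\Lambda+\cdots+\Lambda^{k-1})(g)$, the fact that $(G,+)$ is a cyclic $\Z[\Lambda]$-module, the description of $\soc(G)$ as $\langle g^{p^m}\rangle$ on which addition and multiplication coincide, the nilpotence of $N=\Lambda-1$, and the reduction of the statement to $\overline{N}\,\overline{g}=0$ in $G/pG$ are all right. But the proposal stops exactly where the proof has to start: you write that ``the aim is to show that $s\ge 2$ is incompatible with $n\ge 3$'' and then label this ``the main obstacle,'' without ever deriving the contradiction. Nothing in your outline actually uses the hypothesis $n\ge 3$, and no argument is supplied that excludes $s\ge 2$; the Kummer-valuation bookkeeping is set up but never brought to a conclusion. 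This is a plan with the decisive step missing, not a proof.

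For comparison, the paper closes this gap by a different organization of the same ingredients: it inducts on $n$, importing the base case $n=3$ from the classification of braces of order $p^3$ (this is where $n\ge 3$ enters, and your outline has no substitute for it); it uses $\soc(G)\ne 0$ (valid since a finite two-sided brace has nontrivial socle) to pass to $G/\langle x\rangle$ and conclude that $(G,+)$ is either $\Z/(p^n)$ or $\Z/(p)\times\Z/(p^{n-1})$, so the crux computation only has to be carried out for one explicit two-generator group; and there it writes $\lambda_\xi$ as an explicit $2\times 2$ matrix in a $p$-Sylow subgroup of $\Aut(\Z/(p)\times\Z/(p^{n-1}))$, splits into the cases $\alpha=0$ and $c=0$, and in the latter case verifies $\Id+\lambda_\xi+\cdots+\lambda_\xi^{p^{n-1}-1}=0$ using $\binom{p^{n-1}}{2}\equiv 0\pmod{p}$ together with Rump's congruence $\sum_{i=0}^{p^{n-1}-1}(1+pd)^i\equiv 0\pmod{p^{n-1}}$, contradicting $\xi^{p^{n-1}}\neq 0$. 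Your ``alternative organization'' paragraph gestures at exactly this inductive route but again defers the exclusion of the non-cyclic extension to the same unexecuted computation. To make the proposal into a proof you would need either to carry out the filtered valuation argument in full (including the separate treatment of $p=2$ you flag, and some account of why $n\ge 3$ is needed) or to perform the paper's reduction to $\Z/(p)\times\Z/(p^{n-1})$ and supply the base case.
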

\begin{proof}
We prove it by induction over $n$. The case $n=3$ is true by the classification of braces of order $p^3$ in
 \cite{bracesp3}. For $n>3$, assume that
 it is true for $n-1$. Then, since $(G,\cdot)$ is abelian, $G$ is a two-sided brace, so $\Soc(G)\neq 0$ by \cite[Proposition 3]{CJO2}.
 Thus there exists an
 element $x\neq 0$ in $\Soc(G)$ of multiplicative order $p$. Note that in this case $\Soc(G)=\operatorname{Fix}(\lambda_\xi)$, where $\xi$ is
 a multiplicative generator of $G$. This implies that $\langle x\rangle_\cdot=\langle x\rangle_+$ is an ideal of $G$.
  Then $G/\langle x\rangle$ is a brace of order $p^{n-1}$ with $(G/\langle x\rangle,\cdot)\cong \Z/(p^{n-1})$. By induction hypothesis,
 $(G/\langle x\rangle,+)\cong \Z/(p^{n-1})$. This restricts the possible additive groups of $G$ to only two cases: $\Z/(p^n)$ or
 $\Z/(p)\times\Z/(p^{n-1})$. If $(G,+)\cong\Z/(p^n)$, we are done, so assume $(G,+)\cong \Z/(p)\times\Z/(p^{n-1})$ to arrive
 to a contradiction.

 The element $x$ in $\Z/(p)\times\Z/(p^{n-1})$ must be of the form $(\alpha,p^{n-2}\beta)$ because it has additive order
 equal to $p$. Moreover, it must be a fix element of the automorphism $\lambda_\xi$. An automorphism of $\Z/(p)\times\Z/(p^{n-1})$
 can be written in the matrix form
 $\begin{pmatrix}
   a& b\\
   p^{n-2}c& d\\
  \end{pmatrix}$, with $ad\not\equiv 0\pmod{p}$,
  where the entries in the first row are elements of $\Z/(p)$, and the entries in the
  second one are elements of $\Z/(p^{n-1})$. It is a compact way to express that any endomorphism
  $f$ of $\Z/(p)\times\Z/(p^{n-1})$ satisfies $f(x,y)=xf(1,0)+yf(0,1)$, so it is determined
  by $f(1,0)=(a,p^{n-2}c)$ (which has to be an element of order $p$), and $f(0,1)=(b,d)$.
  The condition $ad\not\equiv 0\pmod{p}$ ensures that $f$ is bijective.

  Thus the order of $\Aut(\Z/(p)\times\Z/(p^{n-1})$ is $(p-1)\cdot p\cdot p\cdot(p^n-p^{n-1})=p^{n+1}(p-1)^2$.
  Then, a $p$-Sylow subgroup of $\Aut(\Z/(p)\times\Z/(p^{n-1})$ is
  $$\left\lbrace\begin{pmatrix}
   1& b\\
   p^{n-2}c& 1+pd\\
  \end{pmatrix}:~ b\in\Z/(p),~ c,d\in\Z/(p^{n-1})\right\rbrace.$$
Since $\lambda_\xi$ has prime power order, after a suitable conjugation, it can be written as
$\begin{pmatrix}
   1& b\\
   p^{n-2}c& 1+pd\\
  \end{pmatrix}$. So a fixed element $x=(\alpha,p^{n-2}\beta)$ must satisfy
  $$
  \begin{pmatrix}
   \alpha\\
   p^{n-2}\beta
  \end{pmatrix}=
  \begin{pmatrix}
   1& b\\
   p^{n-2}c& 1+pd\\
  \end{pmatrix}
  \begin{pmatrix}
   \alpha\\
   p^{n-2}\beta
  \end{pmatrix}=
  \begin{pmatrix}
   \alpha\\
   p^{n-2}c\alpha+p^{n-2}\beta
  \end{pmatrix}.
  $$
  This is satisfied if $c=0$ or if $\alpha=0$. If $\alpha=0$, then $x=(0,p^{n-2}\beta)$. But
  $$
  (\Z/(p)\times\Z/(p^{n-1}))/\langle (0,p^{n-2}\beta)\rangle\cong \Z/(p)\times\Z/(p^{n-2}),
  $$
  a contradiction. On the other hand, if $c=0$, then
  $$
  \lambda_\xi^k=\begin{pmatrix}
                 1& kb\\
                 0& (1+pd)^k
                \end{pmatrix}
  $$
  $$
  \Id+\lambda_\xi+\lambda_\xi^2+\cdots +\lambda_\xi^{j-1}=
  \begin{pmatrix}
   j& \binom{j}{2} b\\
   0& \sum_{i=0}^{j-1} (1+pd)^i
  \end{pmatrix}.
  $$
  Thus, for $j=p^{n-1}$, we get $$\Id+\lambda_\xi+\lambda_\xi^2+\cdots +\lambda_\xi^{p^{n-1}-1}=0,$$
  because $\binom{p^{n-1}}{2}\equiv 0\pmod{p}$, and
  $$\sum_{i=0}^{p^{n-1}-1} (1+pd)^i=\frac{(1+pd)^{p^{n-1}}-1}{pd}\equiv 0\pmod{p^{n-1}}$$
  using \cite[Lemma 4]{rump4}.
  But this implies $\xi^{p^{n-1}}=(\Id+\lambda_\xi+\lambda_\xi^2+\cdots +\lambda_\xi^{p^{n-1}-1})(\xi)=0$, a
  contradiction with the fact that $\xi$ has multiplicative order equal to $p^n$.
\end{proof}

Fortunately, the complete classification of all the left brace structures with cyclic additive group of order $p^n$
is known \cite{rump4}. That allows us to determine all the basic solutions in the next example.

\begin{example}
  Now that we know that the additive group is always cyclic when the multiplicative group
  is $\Z/(p^n)$ with $n\geq 3$, we can apply Rump's classification \cite{rump4}.
  If the multiplicative group is also cyclic, there is a brace structure $G_i$ for every $i\in\{1,\dots,n\}$,
  given as a product over $\Z/(p^n)$ by $x\cdot y:=x+y+p^ixy$,
  except for $p=2$, where $i\in\{2,\dots,n\}$ (see \cite[Theorem 1]{rump4}). Note that, through all this example,
  we use a product with a dot to represent the multiplication in a brace, and a product without dot to represent
  the usual ring product over $\Z/(p^n)$.

  Then, it is easy to determine the socle of these braces $G_i$:
  $$
  \Soc(G_i)=\langle p^{n-i}\rangle\cong \Z/(p^i).
  $$
  We also need to determine the brace automorphism group:
  $$
  \Aut(G_i,+,\cdot)\cong\{1+p^{n-i}k:k\in \Z/(p^n)\}\leq (\Z/(p^n))^*,
  $$
  except when $i=n$, which is
  $$
   \Aut(G_n,+,\cdot)=\Aut(G_n,+)\cong (\Z/(p^n))^*
  $$

  To compute the orbits, observe that any element $a\in \Z/(p^n)$ can be written as
  $$
  a=a_0+a_1p+a_2p^2+\cdots +a_{n-1}p^{n-1},
  $$
  with $a_i\in\{0,1,\dots,p-1\}$, and the action of the lambda maps over one of these elements is
  \begin{eqnarray*}
  &&\lambda_x(a)=(1+p^ix)(a_0+a_1p+a_2p^2+\cdots +a_{n-1}p^{n-1})\\
  &=&(a_0+a_1p+\cdots+a_{i-1}p^{i-1})+(a_i+a_0x)p^i+\cdots+(a_{n-1}+a_{n-i-1}x)p^{n-1},
  \end{eqnarray*}
  for each $x\in \Z/(p^n)$. Note that if $a_0=\dots=a_{n-i-1}=0$, then $\lambda_x(a)=a$ for every $x$, so the orbit has only one
  element. Observe also that, if $k$ is the first integer between $0$ and $n-i-1$ such that $a_k=0$, then, using different $x$'s,
  any element of the form $a+mp^{i+k}$, $m\in\Z/(p^n)$, belongs to the orbit of $a$.
  Thus each orbit of $G_i$ with respect to $\lambda$ belongs to one of the following classes of orbits:
  \begin{enumerate}[(a)]

  \item $X^{(1)}_{\alpha}:=\{\alpha+p^ix: x\in \Z/(p^n)\}\text{, for each } \alpha\in\{1,\dots,p^i\},~ \alpha\not\equiv 0\pmod{p};$

  \item $X^{(2)}_{\beta p^k}:=\{\beta p^k+p^{i+k}x: x\in \Z/(p^n)\}$ for any $\beta\in\{1,\dots,p^i\},~ \beta\not\equiv 0\pmod{p},$
   and for any $k\in\{1,\dots,n-i-1\};$

  \item $X^{(3)}_{p^{n-i}\gamma}:=\{p^{n-i}\gamma\}$, for each $\gamma\in\{0,1,\dots,p^i-1\}.$

  \end{enumerate}

  For each class of orbits, the stabilizer of any element (which corresponds to the possible $H_i$'s) is equal to:
  \begin{enumerate}[(a)]
  \item Since the cardinality of the orbit is $\mid\{\alpha+p^ix:x\in \Z/(p^n)\} \mid=p^{n-i}$,
  the stabilizer is the unique subgroup of $\Z/(p^n)$ of order $p^i$;
  \item Since the cardinality of the orbit is $\mid \{\beta p^k+p^{i+k}x:x\in \Z/(p^n)\}\mid=p^{n-i-k}$
  the stabilizer is the unique subgroup of $\Z/(p^n)$ of order $p^{i+k}$;
  \item $G$.
  \end{enumerate}

  To have trivial intersection, we need at least one $K_{j,l}$ equal to $0$. Moreover, we need an additive
  generator of $\Z/(p^n)$ in the subset $X$, and all those elements belong to some $X^{(1)}_{\alpha}$.
  Thus the basic solutions are
  \begin{enumerate}
   \item $X_1=X^{(1)}_{\alpha}$, $K_{1,1}=0$.
   \item $X_1=X^{(3)}_{p^{n-i}\gamma}$, $X_2=X^{(1)}_{\alpha}$, $K_{1,1}=0$, $K_{2,1}=\langle p^{n-i}\rangle$.
   \item $X_1=X^{(2)}_{\beta p^k}$, $X_2=X^{(1)}_{\alpha}$, $K_{1,1}=0$, $K_{2,1}=\langle p^{n-i}\rangle$.
  \end{enumerate}

    Finally, to know if there is some isomorphism between these basic solutions, we need to apply brace
    automorphisms of $G$ over them. Note that, since
  the multiplicative group is abelian, the subgroups $K_{j,l}$ remain unchanged by the isomorphism of solutions,
  so we should only care about the effect over the subset $X$, which is equal to $X_1$ in case 1, and equal to $X_1\bigcup X_2$ in cases
  2 and 3. Thus two of these solutions are isomorphic if and only if
  they belong to the same class 1, 2 or 3, and the their two invariant subsets $X$ and $X'$ satisfies $X'=(1+p^{n-i}k)X$,
  for some $k\in\Z/(p^n)$, where here the product is the ring product of $\Z/(p^n)$.
\end{example}

On the other hand, the case of $Q_{2^m}$ is more difficult because we do not have a complete classification of
all the possible brace structures with multiplicative group isomorphic to $Q_{2^m}$.

\vspace{30pt}
 \noindent \begin{tabular}{llllllll}
 D. Bachiller && F. Ced\'{o}  \\
 Departament de Matem\`atiques &&  Departament de Matem\`atiques \\
 Universitat Aut\`onoma de Barcelona &&  Universitat Aut\`onoma de Barcelona  \\
08193 Bellaterra (Barcelona), Spain    && 08193 Bellaterra (Barcelona), Spain \\
 dbachiller@mat.uab.cat &&  cedo@mat.uab.cat\\
   &&   \\
E. Jespers &&  \\ Department of Mathematics &&
\\  Vrije Universiteit Brussel && \\
Pleinlaan 2, 1050 Brussel, Belgium &&\\
efjesper@vub.ac.be&&
\end{tabular}


\begin{thebibliography}{99}
\bibitem{bracesp3}
D. Bachiller, Classification of braces of order $p^3$, J. Pure Appl. Algebra, to appear.
\bibitem{BC}
D. Bachiller, F. Ced\'o, A family of solutions of the Yang-Baxter equation,
J. Algebra 412 (2014), 218--229.
\bibitem{Baxter} R. J. Baxter, Eight-vertex model in lattice statistics,
Phys. Rev. Lett. 26 (1971),  832--833.
\bibitem{BenDavid}
N. Ben David, On groups of central type and
involutive Yang-Baxter groups: a cohomological approach, Ph.D.
thesis, The Technion-Israel Institute of Technology, Haifa, 2012.
\bibitem{CR} F. Catino and R. Rizzo, Regular subgroups of the affine
group and radical circle algebras, Bull. Aust. Math. Soc. 79 (2009), no. 1, 103--107.
\bibitem{CJO} F. Ced\'o, E. Jespers and J. Okni\'{n}ski,
Retractability  of the set theoretic solutions of the Yang-Baxter
equation, Adv. Math. 224 (2010), 2472--2484.
\bibitem{CJO2} F. Ced\'o, E. Jespers and J. Okni\'{n}ski, Braces and the Yang-Baxter equation,
Commun. Math. Phys. 327 (2014), 101--116.
\bibitem{CJR} F. Ced\'o, E. Jespers and \'{A}. del R\'{\i}o, Involutive
Yang-Baxter groups, Trans. Amer. Math. Soc. 362 (2010), 2541--2558.
\bibitem{drinfeld} V. G. Drinfeld, On unsolved problems in quantum group theory.
Quantum Groups, Lecture Notes Math. 1510, Springer-Verlag, Berlin,
1992, 1--8.
\bibitem{EtingofGelaki}  P. Etingof and S. Gelaki, A method of construction of
finite-dimensional triangular semisimple Hopf algebras, Mathematical
Research Letters 5 (1998), 551--561.
\bibitem{ESS} P. Etingof, T. Schedler and A. Soloviev,
  Set-theoretical solutions to the quantum
  Yang-Baxter equation, Duke Math. J. 100 (1999), 169--209.
\bibitem{FCC} S.C. Featherstonhaugh, A. Caranti and L.N. Childs, Abelian
Hopf Galois structures on prime-power Galois field extensions,
Trans. Amer. Math. Soc. 364 (2012), 3675--3684.
\bibitem{Gat} T. Gateva-Ivanova, A combinatorial approach to the
set-theoretic solutions of the Yang-Baxter equation, J. Math. Phys.
45 (2004), 3828--3858.
\bibitem{GC} T. Gateva-Ivanova and P. Cameron, Multipermutation
solutions of the Yang-Baxter equation, Comm. Math. Phys. 309 (2012),
583--621.
\bibitem{gat-maj} T. Gateva-Ivanova and S. Majid,
Matched pairs approach to set theoretic solutions of the Yang-Baxter
equation, J. Algebra 319 (2008), no. 4, 1462--1529.
\bibitem{GIVdB} T. Gateva-Ivanova and M. Van den Bergh,
  Semigroups of $I$-type, J. Algebra 206 (1998), 97--112.
  \bibitem{Gorenstein}
D. Gorenstein: Finite groups. Chelsea Pub. Co., New York (1980).
\bibitem{JO} E. Jespers and J. Okni\'{n}ski, Monoids and groups of
  $I$-type, Algebr. Represent. Theory 8 (2005), 709--729.
\bibitem{JObook} E. Jespers and J. Okni\'{n}ski,  Noetherian Semigroup Rings,
Springer, Dordrecht 2007.
\bibitem{kassel} C. Kassel,  Quantum Groups, Graduate Text in Mathematics 155,
Springer-Verlag, New York, 1995.
\bibitem{LYZ} Jiang-Hua Lu, Min Yan and Yong-Chang Zhu, On the set-theoretical Yang-Baxter
equation, Duke Math. J. 104 (2000),153--170.
\bibitem{rump1} W. Rump, A decomposition theorem for square-free
unitary solutions of the quantum Yang-Baxter equation, Adv. Math.
193 (2005), 40--55.
\bibitem{rump3} W. Rump, Braces, radical rings, and the quantum Yang-Baxter equation,
J. Algebra 307 (2007), 153--170.
\bibitem{rump4} W. Rump, Classification of cyclic braces, J. Pure Appl. Algebra 209 (2007),
671--685.
\bibitem{sysak} Y. P. Sysak, Product of group and the quantum
Yang-Baxter equation, notes of a talk in Advances in Group Theory
and Applications, 2011, Porto Cesareo.
\bibitem{Yang} C.N. Yang, Some exact results for the many-body
problem in one dimension with repulsive delta-function interaction,
Phys. Rev. Lett. 19 (1967), 1312--1315.


\end{thebibliography}
\end{document}